\newtheorem{theorem}{Theorem}
\newtheorem{proposition}{Proposition}
\newtheorem{lemma}{Lemma}
\title{On closed embeddings of free topological algebras}
\author{Taras Banakh, Olena Hryniv}
\address{Ivan Franko National University of Lviv}
\email{tbanakh@yahoo.com}
\subjclass{08B20, 54H10, 54H11, 54B30, 54E20}
\keywords{universal topological algebra, free universal algebra,
closed embedding, stratifiable space, metrizable space,
Hartman-Mycielski construction}
\begin{document}
\begin{abstract} Let $\mathcal K$ be a complete quasivariety of completely regular
universal topological algebras of continuous signature $\mathcal E$ (which means that $\mathcal K$ is closed under taking subalgebras, Cartesian products, and
includes all completely regular topological $\mathcal E$-algebras
algebraically isomorphic to members of $\mathcal K$). For a topological space $X$
by $F(X)$ we denote the free universal $\mathcal E$-algebra over $X$ in the class
$\mathcal K$.
Using some extension properties of the Hartman-Mycielski
construction we prove that for a closed subspace $X$ of a metrizable
(more generally, stratifiable) space
$Y$ the induced homomorphism $F(X)\to F(Y)$ between the respective free universal
algebras is a closed topological embedding. This generalizes one
result of V.Uspenski\u\i\ \cite{11} concerning embeddings of free
topological groups.
\end{abstract}

\maketitle

One of important recent achievements in the theory of free
topological groups is a charming theorem by O.Sipacheva \cite{9}
asserting that the free topological group $F(X)$ of a subspace $X$
of a Tychonoff space $Y$ is a topological subgroup of $F(Y)$ if and
only if any continuous pseudometric on $X$ can be extended to a
continuous pseudometric on $Y$, see \cite{9}. The ``only if''
part of this theorem was proved earlier by V.~Pestov \cite{8}
while the ``if'' part was proved by V.V.~Uspenski\u\i\ \cite{11} for
the partial case of metrizable (or more generally, stratifiable)
$Y$. To prove their theorems both Uspenski\u\i\ and Sipacheva used a
rather cumbersome technique of pseudonorms on free topological
groups which makes their method inapplicable to studying some
other free objects.

In this paper using a categorial technique based on extension
properties of the Hartman-Mycielski construction we shall
generalize the Uspenski\u\i\ theorem and prove some general results
concerning embeddings of free universal algebras. In should be
mentioned that the Hartman-Mycielski construction has been
exploited in \cite{2} for proving certain results concerning
embeddings of free topological inverse semigroups.

Now we remind some notions of the topological theory of universal
algebras developed by M.M.~Choban and his collaborators, see
\cite{5}. Under a {\it continuous signature} we shall understand
a sequence $\mathcal E=(E_n)_{n\in\omega}$ of topological spaces. A
{\it universal topological algebra of continuous signature $\mathcal E$} or briefly a {\it topological $\mathcal E$-algebra} is a
topological space $X$ endowed with a sequence of continuous maps
$(e_n:E_n\times X^n\to X)_{n\in\omega}$ called {\it algebraic
operations} of $X$. A subset $A\subset X$ is called a {\it
subalgebra} of $X$ if $e_n(E_n\times A^n)\subset A$ for all
$n\in\omega$. Under {\it a homomorphism} between topological
${\mathcal E}$-algebras $(X,(e^X_n)_{n\in\omega})$ and
$(Y,(e^Y_n)_{n\in\omega})$ we understand a map $h:X\to Y$ such
that $$e_n^Y(c,h(x_1),\dots,h(x_n))=h(e^X_n(c,x_1,\dots,x_n))$$
for any $n\in\omega$, $c\in E_n$, and points $x_1,\dots,x_n\in X$. Two
topological ${\mathcal E}$-algebras $X$, $Y$ are {\it (algebraically)
isomorphic} if there is a bijective homomorphism $h:X\to Y$. If,
in addition, $h$ is a homeomorphism, then $X$ and $Y$ are
{\it topologically isomorphic}.

Under a {\it free universal algebra} of a topological space $X$ in
a class ${\mathcal K}$ of topological $\mathcal E$-algebras we understand
a pair $(F(X),i_X)$ consisting of a topological ${\mathcal E}$-algebra
$F(X)\in{\mathcal K}$ and a continuous map $i_X:X\to F(X)$ such that
for any continuous map $f:X\to K$ into a topological ${\mathcal E}$-algebra $K\in\mathcal K$ there is a unique continuous homomorphism
$h:F(X)\to K$ such that $f=h\circ i_X$.
It follows that for any
continuous map $f:X\to Y$ between topological spaces there is a
unique continuous homomorphism $F(f):F(X)\to F(Y)$ such that
$F(f)\circ i_X=i_Y\circ f$. Our aim in the paper is to find
conditions on $f$ guaranteeing that the homomorphism $F(f)$ is a
topological embedding.

According to \cite{5}, a free universal algebra $(F(X),i_X)$ of
a topological space $X$ exists (and is unique up to a topological
isomorphism) provided ${\mathcal K}$ is {\it a
quasivariety} which means that the class ${\mathcal K}$ is closed
under taking subalgebras and arbitrary Cartesian products. A
quasivariety ${\mathcal K}$ of topological ${\mathcal E}$-algebras is
called {\it a complete quasivariety} if any
completely regular topological ${\mathcal E}$-algebra $X$
algebraically isomorphic to a topological algebra $Y\in{\mathcal K}$
belongs to the class $\mathcal K$.

Finally we remind that a regular topological space $X$ is called
{\it stratifiable} if there exists a function $G$ which assigns to
each $n\in\omega$ and a closed subset $H\subset X$, an open set
$G(n,H)$ containing $H$ so that
$H=\bigcap_{n\in\omega}\overline{G(n,H)}$  and $G(n,K)\supset
G(n,H)$ for every closed subset $K\supset H$ and $n\in\omega$. It
is known that the class of stratifiable spaces includes all
metrizable spaces and is closed with respect to many countable
operations over topological spaces, see \cite{3}, \cite{6}.

Now we are able to formulate one of our main results.

\begin{theorem}\label{T1} Let $\mathcal K$ be a complete quasivariety of
completely regular topological $\mathcal E$-algebras of
continuous signature $\mathcal E$.
For any closed topological embedding $e:X\to Y$ between
stratifiable spaces the induced homomorphism $F(e):F(X)\to F(Y)$
between the corresponding free algebras is a closed topological
embedding.
\end{theorem}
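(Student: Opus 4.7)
The plan is to exploit the universal property of $F(Y)$ by showing that every continuous homomorphism $\varphi\colon F(X)\to K$ with $K\in\mathcal K$ factors through a continuous homomorphism $\Phi\colon F(Y)\to\widetilde K$ for some $\widetilde K\in\mathcal K$ canonically containing $K$ as a closed subalgebra. Since homomorphisms of this form separate points of $F(X)$ and their product induces the topology of $F(X)$ as a subspace of $\prod K$, this will simultaneously give injectivity and the topological embedding property of $F(e)$.

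The source of the extension $\widetilde K$ will be the Hartman-Mycielski functor $HM$. Recall that $HM(Z)$ is the space of step functions $[0,1)\to Z$ with an appropriate topology, in which $Z$ sits as the closed subspace of constant functions. The key input for our setting is the extension property: whenever $X$ is closed in a stratifiable space $Y$, every continuous map $f\colon X\to Z$ admits a continuous extension $\widetilde f\colon Y\to HM(Z)$. The first substantial step I would carry out is to verify that if $Z=K$ is a topological $\mathcal E$-algebra, then the pointwise (in $t\in[0,1)$) application of each operation $e_n^K\colon E_n\times K^n\to K$ to step functions endows $HM(K)$ with the structure of a topological $\mathcal E$-algebra in which $K$ is a closed subalgebra. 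Because $\mathcal K$ is a complete quasivariety, the completely regular algebra $HM(K)$, being algebraically a member of the variety generated by $K$, can be enlarged to some $\widetilde K\in\mathcal K$ containing $HM(K)$ as a subalgebra.

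With this machinery in hand, I would proceed as follows. Given a continuous homomorphism $\varphi\colon F(X)\to K$ with $K\in\mathcal K$, apply the HM extension property to $\varphi\circ i_X\colon X\to K$ to obtain a continuous map $Y\to HM(K)\subset\widetilde K$; by the universal property of $F(Y)$ this lifts to a unique continuous homomorphism $\Phi\colon F(Y)\to\widetilde K$. On generators one has $\Phi\circ F(e)\circ i_X=j_K\circ\varphi\circ i_X$ (where $j_K\colon K\hookrightarrow\widetilde K$), and this extends to the whole of $F(X)$ by uniqueness. Letting $\varphi$ range over a point-separating, topology-generating family of continuous homomorphisms of $F(X)$ into members of $\mathcal K$, the product of the $\Phi$'s shows that the subspace topology on $F(e)(F(X))$ inherited from $F(Y)$ already refines the topology of $F(X)$; the reverse inequality is automatic from continuity of $F(e)$. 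Injectivity follows from the same diagram together with the fact that $j_K\circ\varphi$ can be chosen to separate any two given points of $F(X)$.

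The main obstacle, which I expect to require the most care, is twofold: first, ensuring that $HM(K)$ genuinely carries a continuous $\mathcal E$-algebra structure for an arbitrary continuous signature $(E_n)_{n\in\omega}$, which is a delicate verification involving the precise topology on $HM$; and second, upgrading ``topological embedding'' to ``closed topological embedding''. For closedness I would argue that any $w\in F(Y)\setminus F(e)(F(X))$ can be separated from $F(e)(F(X))$ by a continuous homomorphism into some $\widetilde K\in\mathcal K$, by constructing the test map on $Y$ so that it distinguishes the letters of $w$ lying in $Y\setminus X$ from everything coming from $X$; this is where closedness of $e(X)$ in $Y$ enters, via the fact that $HM$-extensions can be chosen to send $Y\setminus X$ outside the copy of $K$ inside $HM(K)$.
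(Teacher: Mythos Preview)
Your core idea---use the Hartman--Mycielski construction together with its extension property for closed subspaces of stratifiable spaces---is exactly the paper's idea. But the paper organizes it far more efficiently, and in doing so avoids the gap in your closedness argument.

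The simplification you are missing is this: rather than ranging over \emph{all} homomorphisms $\varphi\colon F(X)\to K$, just take $K=F(X)$ and $\varphi=\mathrm{id}_{F(X)}$. Concretely, the extension property gives a single continuous map $r\colon Y\to HM(X)$ with $r\circ e=hm_X$; applying $F$ and then the canonical comparison homomorphism $h\colon F(HM(X))\to HM(F(X))$ (which exists because $HM(F(X))\in\mathcal K$) yields one continuous homomorphism $\Psi=h\circ F(r)\colon F(Y)\to HM(F(X))$ satisfying $\Psi\circ F(e)=hm_{F(X)}$. Since $hm_{F(X)}$ is a closed embedding (here is where Hausdorffness of $F(X)$ enters), the elementary lemma ``if $g\circ f$ is a (closed) topological embedding then so is $f$'' immediately gives that $F(e)$ is a closed topological embedding. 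Injectivity, the topology comparison, and closedness all fall out at once from this single factorization; no product over a separating family is needed.

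Your separate argument for closedness is where there is a genuine gap. You speak of ``the letters of $w$ lying in $Y\setminus X$'', which presupposes that elements of $F(Y)$ have a word-like normal form over $Y$. For an arbitrary continuous signature $\mathcal E=(E_n)_{n\in\omega}$ and an arbitrary complete quasivariety $\mathcal K$ there is no such description available, so this line of reasoning cannot be made rigorous in the stated generality. The single-map argument above sidesteps the issue entirely. One smaller point: your enlargement to $\widetilde K$ is unnecessary, since $HM(K)$ is algebraically a subalgebra of $K^{[0,1)}$ and is completely regular, hence already belongs to the complete quasivariety $\mathcal K$.
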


In fact, Theorem~\ref{T1} follows from a more general result involving
the construction of Hartman and Mycielski. This
construction appeared in \cite{7} and was often exploited in
topological algebra, see \cite{4}. For a topological
space $X$ let $HM(X)$ be the set of all functions $f:[0;1) \to
X$ for which there exists a sequence $0=a_{0}<a_{1}<\dots<a_{n}=1$
such that $f$ is constant on each interval $[a_{i-1},a_{i})$,
$1\leq i\leq n$. A
neighborhood sub-base of the Hartman-Mycielski topology of
$HM(X)$ at an $f\in HM(X)$ consists of sets $N(a,b,V,\varepsilon)$,
where
\begin{itemize}
\item[1)] $0\leq a<b\leq 1$, $f$ is constant on $[a;b)$, $V$ is
    neighbourhood of $f(a)$ in $X$ and $\varepsilon>0$;
\item[2)] $g\in N(a,b,V,\varepsilon)$ means that $|\{t\in [a;b):
    g(t)\notin V\} |<\varepsilon$, where $|\cdot|$ denotes the Lebesgue
    measure on $[0,1)$.
\end{itemize}

If $X$ is a Hausdorff (Tychonoff) space, then so is the space
$HM(X)$, see \cite{7}, \cite{4}. The construction $HM$ is functorial in the sense
that for any continuous map $p:X\to Y$ between topological spaces the map
$HM(p):HM(X)\to HM(Y)$, $HM(p):f\mapsto p\circ f$, is continuous,
see \cite{7}, \cite{4}, \cite{10}.

The space $X$ can be identified with a subspace of
$HM(X)$ via the embedding $hm_X:X\to HM(X)$ assigning to each point $x$ the constant
function $hm_X(x):t\mapsto x$. This embedding $hm_X:X\to HM(X)$
is closed if $X$ is Hausdorff. It is easy to see
that for any continuous map $f:X\to Y$ we get
a commutative diagram:
$$
\begin{CD}
X@>f>> Y\\
@V{hm_X}VV @VV{hm_Y}V\\
HM(X)@>{HM(f)}>> HM(Y).
\end{CD}
$$

Our interest in the Hartman-Mycielski construction is stipulated
by the following important extension result proven in \cite{1}.

\begin{proposition}\label{P1} For a closed subspace $X$ of a
stratifiable space $Y$ there is a continuous map $r:Y\to HM(X)$
extending the embedding $hm_X:X\subset HM(X)$.
\end{proposition}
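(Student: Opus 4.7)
My plan is to view $HM(X)$ as a space of finitely supported probability measures on $X$: a step function taking value $x_i$ on a subinterval of length $\lambda_i$ corresponds to the measure $\sum_i \lambda_i\delta_{x_i}$. From this viewpoint, producing $r$ amounts to a Dugundji-type extension, assigning to each $y\in Y$ a finitely supported ``convex combination'' of points of $X$ which collapses to $\delta_y$ whenever $y\in X$. The novelty, compared with the classical metric Dugundji argument, is that no linear or convex structure on $X$ is needed: the role of convex combinations is played inside $HM(X)$ via subdivisions of $[0,1)$.

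To carry this out, I would start from the stratification: set $G_n:=G(n,X)$ and (replacing $G_n$ by $\bigcap_{k\le n}G(k,X)$ if necessary) assume $G_1\supset G_2\supset\cdots$ with $X=\bigcap_n\overline{G_n}$. Since stratifiable spaces are paracompact and admit $\sigma$-discrete refinements of every open cover, for each $n$ I would construct a discrete family $\mathcal U_n=\{U_{\alpha,n}\}_{\alpha\in A_n}$ of open subsets of $Y\setminus X$ whose union contains $G_n\setminus\overline{G_{n+2}}$, so that $\bigcup_n\mathcal U_n$ is a $\sigma$-discrete open cover of $Y\setminus X$. Using the monotonicity of $G$, for every pair $(\alpha,n)$ I would select a point $x_{\alpha,n}\in X$ such that $U_{\alpha,n}\subset G(n,\{x_{\alpha,n}\})$; this is the step that replaces the metric ``nearest-point'' selection of the classical argument. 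Let $\{\phi_{\alpha,n}\}$ be a partition of unity on $Y\setminus X$ subordinate to $\bigcup_n\mathcal U_n$, fix an enumeration of the countable index set $\bigsqcup_n A_n$, and for each $y\in Y\setminus X$ partition $[0,1)$ into consecutive half-open intervals $J_{\alpha,n}(y)$ of lengths $\phi_{\alpha,n}(y)$ in the chosen order. Define $r(y)$ to be the step function taking value $x_{\alpha,n}$ on $J_{\alpha,n}(y)$, and put $r(y):=hm_X(y)$ for $y\in X$.

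Continuity of $r$ at a point $y\in Y\setminus X$ is then routine: only finitely many $\phi_{\alpha,n}$ are nonzero near $y$, they vary continuously, and the associated points $x_{\alpha,n}$ are locally constant in $(\alpha,n)$, so the interval partition of $[0,1)$ varies continuously in Lebesgue measure. The delicate case, and the \textbf{main obstacle}, is continuity at a point $x\in X$. Given a subbasic neighbourhood $N(a,b,V,\varepsilon)$ of $hm_X(x)$, one must exhibit a neighbourhood $W$ of $x$ in $Y$ such that for every $y\in W$ the total Lebesgue measure of subintervals of $[a,b)$ on which $r(y)(t)\notin V$ is less than $\varepsilon$. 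The key point is that the selection $U_{\alpha,n}\subset G(n,\{x_{\alpha,n}\})$ together with $X=\bigcap_n\overline{G_n}$ and monotonicity of $G$ forces any index $(\alpha,n)$ with $\phi_{\alpha,n}(y)>0$ for $y$ close to $x$ and $n$ large to have $x_{\alpha,n}\in V$. Choosing $W$ as a suitable stratification neighbourhood of $x$, one then needs to show that the finitely many ``bad'' indices $(\alpha,n)$ with small $n$ and $x_{\alpha,n}\notin V$ together carry total mass below $\varepsilon$; making this tail estimate precise, by interlacing the construction of the covers $\mathcal U_n$ with the selection of $W$, is the technical heart of the argument and is precisely where the flexibility of $HM(X)$ — allowing a single point's mass to be distributed over many approximations — becomes indispensable.
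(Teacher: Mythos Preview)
The paper does not prove this proposition at all: it is quoted verbatim as ``the following important extension result proven in \cite{1}'' (Banakh--Bessaga, \emph{Bull.\ Polish Acad.\ Sci.\ Math.}\ 48 (2000), 201--208) and is used as a black box. So there is no in-paper argument to compare against; your Dugundji-style sketch is an attempt to reconstruct the content of the cited reference.

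Your outline has the right shape, but two steps are not yet justified. First, you write ``fix an enumeration of the countable index set $\bigsqcup_n A_n$''; in a non-separable stratifiable space the discrete families $\mathcal U_n$ can be uncountable, so no global enumeration exists. What saves you is that, by discreteness, at most one member of each $\mathcal U_n$ meets a small neighbourhood of any $y$, so the ordering by the integer $n$ alone is canonical and locally stable---but you should say this rather than invoke an enumeration. Second, and more seriously, the selection ``$U_{\alpha,n}\subset G(n,\{x_{\alpha,n}\})$'' does not follow from the bare axioms of a stratification: monotonicity gives $G(n,\{x\})\subset G(n,X)$, not $G(n,X)=\bigcup_{x\in X}G(n,\{x\})$. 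One must first replace the given stratification by one generated from a Ceder--Heath style $g$-function (so that $G(n,H)=\bigcup_{x\in H}g(n,x)$ by definition), and this same $g$-function is what later makes the ``tail estimate'' at points of $X$ go through: if $y\in g(n,x_{\alpha,n})$ and $y\to x$ with $n\to\infty$, the $g$-function axioms force $x_{\alpha,n}\to x$. Without introducing and using such a $g$-function explicitly, both the selection step and the continuity at $X$ remain assertions rather than proofs.
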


It will be convenient to call a subspace $X$ of a space $Y$ {\it
an $HM$-valued retract of $Y$} if there is a continuous map
$r:Y\to HM(X)$ extending the canonical embedding $hm_X:X\subset HM(X)$.
In these terms Proposition 1 asserts that each closed subspace of
a stratifiable space $Y$ is an HM-valued retract of $Y$.

As a set, the space $HM(X)$ can be thought as a subset of the
Cartesian power $X^{[0,1)}$. Moreover, if $(X,(e_n)_{n\in\omega})$
is a topological ${\mathcal E}$-algebra, then $HM(X)$ is a subalgebra of
$X^{[0,1)}$.
Let $$\{e^{HM}_n:E_n\times HM(X)^n\to HM(X)\}_{n\in\omega}$$ denote
the induced algebraic operations on $HM(X)$. That is,
$e_n^{HM}(c,f_1,\dots,f_n)(t)=e_n(c,f_1(t),\dots,f_n(t))$ for
$n\in\omega$, $(c,f_1,\dots,f_n)\in E_n\times HM(X)^n$, and $t\in[0,1)$.
It is
easy to verify that the continuity of the operation $e_n$ implies the
continuity of the operation $e_n^{HM}$ with respect to the
Hartman-Mycielski topology on $HM(X)$). Thus we get

\begin{proposition}\label{P2} If $(X,(e_n)_{n\in\omega})$ is a topological
${\mathcal E}$-algebra, then $(HM(X), (e_n^{HM})_{n\in\omega})$ is a
topological ${\mathcal E}$-algebra too. Moreover
the embedding $hm_X:X\to HM(X)$ is a homomorphism of
topological ${\mathcal E}$-algebras.
\end{proposition}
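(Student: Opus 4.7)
The plan is to verify the two assertions in turn, dispatching the homomorphism claim quickly and focusing effort on continuity of the induced operations. Since $hm_X(x)$ is the constant function $t\mapsto x$, the value $e_n^{HM}(c,hm_X(x_1),\dots,hm_X(x_n))(t)$ equals $e_n(c,x_1,\dots,x_n)$ for every $t$, and this is exactly $hm_X(e_n(c,x_1,\dots,x_n))$; so the second statement reduces to a one-line unpacking of the definitions. The set-theoretic closure of $HM(X)$ under $e_n^{HM}$ is equally easy: given $f_1,\dots,f_n\in HM(X)$, a common refinement of their step partitions makes each $f_i$ constant on every piece, hence $e_n^{HM}(c,f_1,\dots,f_n)$ is a step function as well.

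The substance of the proof is the continuity of $e_n^{HM}:E_n\times HM(X)^n\to HM(X)$ at an arbitrary point $(c,f_1,\dots,f_n)$. I would fix a sub-basic neighborhood $N(a,b,V,\varepsilon)$ of $g=e_n^{HM}(c,f_1,\dots,f_n)$ and refine the partition of $[a,b)$ to $a=b_0<b_1<\dots<b_k=b$ so that each $f_i$ is constant on every $[b_{j-1},b_j)$, with value $x_{i,j}$. Because $g$ is constant on $[a,b)$ with value $g(a)\in V$, we get $e_n(c,x_{1,j},\dots,x_{n,j})=g(a)\in V$ for every $j$, so by continuity of $e_n$ at each point $(c,x_{1,j},\dots,x_{n,j})$ there exist open sets $W_j\ni c$ in $E_n$ and $V_{i,j}\ni x_{i,j}$ in $X$ with $e_n(W_j\times V_{1,j}\times\dots\times V_{n,j})\subset V$.

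Setting $W=\bigcap_{j=1}^k W_j$ and choosing $\delta>0$ with $nk\delta<\varepsilon$, I would propose the neighborhood $W\times\prod_{i=1}^n U_i$ of $(c,f_1,\dots,f_n)$, where $U_i=\bigcap_{j=1}^k N(b_{j-1},b_j,V_{i,j},\delta)$. For any $(c',f_1',\dots,f_n')$ in this product and each $j$, the Lebesgue measure of the set of $t\in[b_{j-1},b_j)$ at which some $f_i'(t)$ leaves $V_{i,j}$ is at most $n\delta$; at the remaining points $g'(t)=e_n(c',f_1'(t),\dots,f_n'(t))\in V$. Summing over $j$ gives total bad measure at most $nk\delta<\varepsilon$, so $g'\in N(a,b,V,\varepsilon)$ as required.

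The only conceptual obstacle is the mismatch between the hypotheses on the sub-basic neighborhood (only $g$ is required to be constant on $[a,b)$) and the pointwise data we need to control (the individual $f_i$), which is why the refinement step is indispensable; after that, the argument is routine bookkeeping of Lebesgue measure together with continuity of $e_n$ at finitely many points.
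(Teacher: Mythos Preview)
Your proof is correct and is precisely the verification the paper omits: the paper simply asserts that ``the continuity of the operation $e_n$ implies the continuity of the operation $e_n^{HM}$'' and states the proposition without further argument. Your refinement of $[a,b)$ to a common step partition of the $f_i$, followed by pointwise use of continuity of $e_n$ and the measure bookkeeping with $nk\delta<\varepsilon$, is exactly the natural way to fill in this ``easy to verify'' step, so there is nothing to compare.
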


Since $HM(K)$ is
algebraically isomorphic to a subalgebra of $X^{[0,1)}$, we
conclude that for each completely regular topological ${\mathcal E}$-algebra $X$
belonging to a complete quasivariety $\mathcal K$
of topological ${\mathcal E}$-algebras the
${\mathcal E}$-algebra $HM(X)$ also belongs to the quasivariety $\mathcal K$.
Now we see that Theorem 1 follows from Proposition 1 and

\begin{theorem}\label{T2} Let ${\mathcal K}$ be a quasivariety
of (Hausdorff) topological ${\mathcal E}$-algebras of continuous
signature ${\mathcal E}$. Then for a subspace $X$ of a
topological space $Y$ the homomorphism $F(e):F(X)\to F(Y)$ induced
by the natural inclusion $e:X\to Y$ is a (closed) topological
embedding provided $X$ is an $HM$-valued retract of $Y$ and
$HM(F(X))\in\mathcal K$.
\end{theorem}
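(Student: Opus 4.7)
The plan is to use the universal property of $F(Y)$ to manufacture a continuous homomorphism $\rho\colon F(Y)\to HM(F(X))$ whose composition with $F(e)$ is the canonical embedding $hm_{F(X)}$, and then to extract both the embedding property and the closedness of $F(e)$ from this factorization.

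For the construction, I would take the $HM$-valued retraction $r\colon Y\to HM(X)$ guaranteed by the hypothesis and compose with $HM(i_X)\colon HM(X)\to HM(F(X))$, which is continuous by functoriality of $HM$. This produces a continuous map $g:=HM(i_X)\circ r\colon Y\to HM(F(X))$. Since $HM(F(X))\in\mathcal K$ by assumption, the universal property of $F(Y)$ yields a unique continuous homomorphism $\rho\colon F(Y)\to HM(F(X))$ with $\rho\circ i_Y=g$. A direct computation on the generators $i_X(X)$, using $r|_X=hm_X$, shows that $\rho\circ F(e)$ and $hm_{F(X)}$ agree there; the uniqueness clause in the universal property of $F(X)$ then forces $\rho\circ F(e)=hm_{F(X)}$ globally.

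The topological embedding property of $F(e)$ is now formal: $F(e)$ is injective because $hm_{F(X)}$ is, and the inverse of $F(e)$ on its image factors as the continuous map $hm_{F(X)}^{-1}\circ\rho$, where $hm_{F(X)}^{-1}$ is continuous on $hm_{F(X)}(F(X))$ since $hm_{F(X)}$ is always a topological embedding of $F(X)$ into $HM(F(X))$.

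The main obstacle, and the place where the parenthetical Hausdorff assumption enters, is upgrading this to a \emph{closed} embedding. A direct equality $F(e)(F(X))=\rho^{-1}(hm_{F(X)}(F(X)))$ fails in general: a generator $i_Y(y)$ with $y\in Y\setminus X$ lies in the right-hand side whenever the step function $r(y)$ happens to be constant, yet need not lie in the subalgebra generated by $i_Y(X)$. To bypass this, I would note that under the Hausdorff hypothesis $hm_{F(X)}$ is a \emph{closed} embedding, so $S:=\rho^{-1}(hm_{F(X)}(F(X)))$ is closed in $F(Y)$, and the formula $\sigma:=hm_{F(X)}^{-1}\circ\rho|_S$ defines a continuous map $\sigma\colon S\to F(X)$ with $\sigma\circ F(e)=\mathrm{id}_{F(X)}$. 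Then $F(e)(F(X))$ is precisely the equalizer $\{w\in S:F(e)(\sigma(w))=w\}$ of two continuous maps into the Hausdorff space $F(Y)$, hence closed in $S$ and therefore closed in $F(Y)$.
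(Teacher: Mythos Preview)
Your argument is correct and follows essentially the same strategy as the paper: factor the canonical embedding $hm_{F(X)}$ through $F(e)$ via a continuous homomorphism into $HM(F(X))$, and deduce the (closed) embedding property from this factorization.

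Two small differences are worth noting. First, you build the left inverse $\rho:F(Y)\to HM(F(X))$ in one step from the universal property of $F(Y)$ applied to $HM(i_X)\circ r$, whereas the paper routes through $F(HM(X))$: it takes $F(r):F(Y)\to F(HM(X))$ and then a homomorphism $h:F(HM(X))\to HM(F(X))$ obtained from the universal property of $F(HM(X))$ applied to $HM(i_X)$. The composite $h\circ F(r)$ satisfies $h\circ F(r)\circ i_Y = h\circ i_{HM(X)}\circ r = HM(i_X)\circ r$, so by uniqueness it coincides with your $\rho$; your route is just slightly more direct. Second, for the closedness the paper invokes without proof the elementary lemma ``if $g\circ f$ is a (closed) embedding then so is $f$'', while you unpack precisely the equalizer argument that proves this lemma when the target $F(Y)$ is Hausdorff. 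Your observation that one cannot simply take $F(e)(F(X))=\rho^{-1}(hm_{F(X)}(F(X)))$ is well taken, and the equalizer description $F(e)(F(X))=\{w\in S:F(e)(\sigma(w))=w\}$ is exactly the right fix.
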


\begin{proof} Suppose that $HM(F(X))\in\mathcal K$ and $X$ is a $HM$-valued
retract of $Y$. The latter means that there is a
continuous map $r:Y\to HM(X)$ such that $hm_X=r\circ e$ where
$hm_X:X\to HM(X)$ and $e:X\to Y$ are natural embeddings. Applying
to the maps $hm_X$, $e$ and $r$ the functor $F$ of the free
universal ${\mathcal E}$-algebra in the quasivariety $\mathcal K$,
we get the equality $F(hm_X)=F(r)\circ F(e)$.

Applying the functor $HM$ to the canonical map $i_X:X\to F(X)$ of $X$ into its free
universal algebra, we get a continuous map $HM(i_X):HM(X)\to HM(F(X))$.
Taking into account that the ${\mathcal E}$-algebra $HM(F(X))$ belongs to the
quasivariety $\mathcal K$ and using the definition of the free algebra
$(F(HM(X)),i_{HM(X)})$ we find a unique continuous
homomorphism $h:F(HM(X))\to HM(F(X))$ such that
$h\circ i_{HM(X)}=HM(i_X)$. Let us show that $h\circ
F(hm_X)=hm_{F(X)}$. Since the maps $h\circ F(hm_X)$ and
$hm_{F(X)}$  are homomorphisms from the free algebra $F(X)$ of
$X$, to prove the equality $h\circ F(hm_X)=hm_{F(X)}$ it suffices
to verify that $h\circ F(hm_X)\circ i_X=hm_{F(X)}\circ i_X$.

By the definition of the homomorphism $F(hm_X)$ we get the commutative diagram
$$
\begin{CD}
X @>{i_X}>> F(X)\\
@V{hm_X}VV @ VV{F(hm_X)}V\\
HM(X)@>{i_{HM(X)}}>> F(HM(X))
\end{CD}
$$
which implies that
$h\circ F(hm_X)\circ i_X=h\circ i_{HM(X)}\circ hm_X=HM(i_X)\circ hm_X$
by the choice of the homomorphism~$h$.

On the other hand, by the naturality of the transformations
$\{hm_Z\}$ we get the commutative diagram
$$ \begin{CD}
X @>{\;\;i_X\;\;}>> F(X)\\
@V{hm_X}VV @ VV{hm_{F(X)}}V\\
HM(X)@ > {\;HM(i_X)\;}>> HM(F(X))
\end{CD}
$$
which implies that $HM(i_X)\circ hm_X=hm_{F(X)}\circ
i_X$. Thus $h\circ F(hm_X)\circ i_X=hm_{F(X)}\circ i_X$ which just yields
$hm_{F(X)}=h\circ F(hm_X)=h\circ F(r)\circ F(e)$. Observe that the
map $hm_{F(X)}$ is an embedding. Moreover, it is closed if $F(X)$
is Hausdorff (which happens if the quasivariety ${\mathcal K}$
consists of Hausdorff ${\mathcal E}$-algebras). Now the following
elementary lemma implies that $F(e)$ is a (closed) embedding.\end{proof}

\begin{lemma} Let $f:X\to Y$, $g:Y\to Z$ be continuous maps. If
$g\circ f:X\to Z$ is a (closed) topological embedding, then so is
the map $f$.
\end{lemma}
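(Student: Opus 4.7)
The lemma splits into two parts: showing $f$ is a topological embedding, and, under the extra hypothesis, showing it is closed.

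For the embedding part, injectivity of $f$ is forced by injectivity of $g\circ f$: if $f(x_1)=f(x_2)$ then $(g\circ f)(x_1)=(g\circ f)(x_2)$ and so $x_1=x_2$. To see that $f\colon X\to f(X)$ is a homeomorphism, I would exhibit a continuous inverse. Since $g\circ f$ is a topological embedding, its corestriction $g\circ f\colon X\to (g\circ f)(X)$ admits a continuous inverse $\varphi\colon(g\circ f)(X)\to X$. Then the map $\psi\colon f(X)\to X$ defined by $\psi=\varphi\circ g|_{f(X)}$ is continuous as a composition of continuous maps, and one checks routinely that $\psi\circ f=\mathrm{id}_X$ and $f\circ\psi=\mathrm{id}_{f(X)}$, using the identity $g|_{f(X)}\circ f=g\circ f$ and the fact that $\varphi$ inverts $g\circ f$. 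This gives the embedding part with essentially no work.

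For the closedness part, I need $f(X)$ to be closed in $Y$. The natural candidate for a closed set containing $f(X)$ is $C:=g^{-1}\bigl((g\circ f)(X)\bigr)$, which is closed in $Y$ because $(g\circ f)(X)$ is closed in $Z$ (this is where the closed-embedding hypothesis enters) and $g$ is continuous. I plan to show $f(X)$ is closed in $C$ by exhibiting it as a retract: the map $\rho\colon C\to f(X)$ given by $\rho(y)=f(\varphi(g(y)))$ is well-defined since $g(C)\subseteq(g\circ f)(X)$, is continuous, and restricts to the identity on $f(X)$ (by the calculation from part (i)). In the Hausdorff setting in which the paper operates (all ambient topological algebras are Hausdorff), a retract of a Hausdorff space is closed, so $f(X)$ is closed in $C$, hence closed in $Y$.

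Alternatively, and more directly in the Hausdorff setting, one can argue by nets: any net $(f(x_\alpha))$ converging to $y\in Y$ satisfies $(g\circ f)(x_\alpha)\to g(y)$, and closedness of $(g\circ f)(X)$ gives $g(y)=(g\circ f)(x_0)$ for some $x_0\in X$; continuity of $\varphi$ forces $x_\alpha\to x_0$, hence $f(x_\alpha)\to f(x_0)$, and uniqueness of limits yields $y=f(x_0)\in f(X)$. The main (indeed, only) conceptual point is that the closed part of the lemma requires the Hausdorff assumption on $Y$, which in the intended application $Y=F(Y)$ is guaranteed by the standing assumption that $\mathcal K$ consists of Hausdorff $\mathcal E$-algebras; everything else is formal diagram chasing.
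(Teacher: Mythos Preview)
Your argument is correct. The paper does not actually prove this lemma: it is stated immediately after the proof of Theorem~2 as an ``elementary lemma'' and left to the reader, so there is no paper proof to compare against.

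One remark worth making explicit. Your observation that the \emph{closed} part of the lemma requires $Y$ to be Hausdorff is not just a technicality but is genuinely necessary: take $X=Z=\{*\}$, let $Y$ be the Sierpin\'ski space $\{0,1\}$ with $\{1\}$ open, set $f(*)=1$ and $g$ constant; then $g\circ f$ is a closed embedding while $f(X)=\{1\}$ is not closed in $Y$. The paper's statement of the lemma omits this hypothesis, but the application supplies it: in the proof of Theorem~2 the parenthetical ``(Hausdorff)'' in the hypothesis on $\mathcal K$ is exactly what makes $F(Y)$ Hausdorff and hence makes the closed conclusion go through. Your retract argument (or the equivalent net argument) is the natural way to see this, and your identification of where the Hausdorff assumption enters is on point.
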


\end{document}